\documentclass[11pt,reqno]{amsart}
\usepackage{amsmath,enumerate,calc,float}
\newtheorem{theorem}{Theorem}
\newtheorem{conjecture}{Conjecture}
\newtheorem{question}{Question}
\newtheorem*{question*}{Question}
\newtheorem{lemma}[theorem]{Lemma}
\newtheorem{corollary}[theorem]{Corollary}
\newtheorem{proposition}[theorem]{Proposition}
\newcommand{\Alt}{A}
\newcommand{\Irr}{{\rm Irr}}
\newcommand{\mrow}{\theta}
\newcommand{\mcol}{\theta'}
\newcommand{\M}{\mathfrak m}
\newcommand{\len}{\mathfrak l}
\newcommand{\co}{\mathfrak f}
\newcommand{\Q}{\mathbb Q}
\begin{document}
\title{Zeros and roots of unity in character tables}
\dedicatory{Dedicated to the memory of Patrick X.\ Gallagher}
\author[A.~R.~Miller]{Alexander~Rossi~Miller}
\email{alexander.r.miller@univie.ac.at}
\begin{abstract}
  For any finite group $G$, Thompson proved that, for each  
  $\chi\in\Irr(G)$, $\chi(g)$ is a root of
  unity or zero for more than a third of the elements $g\in G$,
  and Gallagher proved that, for each larger than average
  class $g^G$, $\chi(g)$ is a root of unity or zero for
  more than a third of the irreducible characters $\chi\in\Irr(G)$.
  We show that in many cases ``more than a third'' can be replaced
  by ``more than half''.
\end{abstract}
\subjclass{20C15}\keywords{Zeros, roots of unity, characters}
\maketitle
\section{Introduction}
For any finite group $G$, let 
\[
  \mrow(G)=\min_{\chi\in \Irr(G)}\frac{|\{g\in G : \chi(g)\ \text{is a root of unity or zero}\}|}{|G|}
\]
and let
\[
  \mcol(G)=\min_{\text{$|g^G|\geq \frac{|G|}{|{\rm Cl}(G)|}$}}\frac{|\{\chi\in\Irr(G) : \chi(g)\ \text{is a root of unity or zero}\}|}{|\Irr(G)|}.
\]
Burnside proved that each $\chi\in\Irr(G)$ with $\chi(1)>1$ has at least one zero,
P.~X.~Gallagher proved that each $g\in G$ with $|g^G|>|G|/|{\rm Cl}(G)|$
is a zero of at least one $\chi\in\Irr(G)$, 
J.~G.~Thompson proved that 
\[
  \mrow(G)>1/3,
\]
and Gallagher proved that 
\[
  \mcol(G)>1/3.
\]
The proofs run by taking
the relations $\sum_{g\in G}|\chi(g)|^2=|G|$ ($\chi\in\Irr(G)$) and
$\sum_{\chi\in\Irr(G)}|\chi(g)|^2=|G|/|g^G|$, applying
the elements $\sigma$ of the Galois group
$\mathcal G={\rm Gal}(\Q(e^{2\pi i/|G|})/\Q)$, averaging 
over $\mathcal G$, and using
that the average over $\mathcal G$ of $|\sigma(\alpha)|^2$
is $\geq 1$ for any nonzero algebraic integer
$\alpha \in\mathbb Q(e^{2\pi i/|G|})$, or using the fact, due to C.~L.~Siegel,
that the average over $\mathcal G$ of $|\sigma(\alpha)|^2$
is $\geq 3/2$ for any algebraic integer
$\alpha \in\mathbb Q(e^{2\pi i/|G|})$ which is neither a root of unity nor zero,
cf.\ \cite{Burnside,Gallagher62,Gallagher,Siegel}.
For certain groups, there are also 
strong asymptotic results about zeros due to Gallagher, M.\ Larsen, and the author \cite{GallagherLarsenMiller,LarsenMiller,Miller}.

Are the lower bounds of $1/3$ for $\{\mrow(G): |G|<\infty\}$ and $1/3$ for $\{\mcol(G): |G|<\infty\}$
the best possible?
\begin{question}\label{Row:Question}
  What is the greatest lower bound of $\{\mrow(G) : |G|<\infty\}$?
\end{question}
\begin{question}\label{Column:Question}
  What is the greatest lower bound of $\{\mcol(G) : |G|<\infty\}$?
\end{question}

The author suspects that the answers to these questions are both $1/2$. In~particular, we propose the following:
\begin{conjecture}\label{General:Conjecture}
$\mrow(G)$ and $\mcol(G)$
are $\geq 1/2$ for every finite~group~$G$.
\end{conjecture}
We establish the conjecture for all finite nilpotent groups by establishing a much stronger result about zeros for this family of groups, which includes all $p$-groups.
The number of $p$-groups of order $p^n$ was
shown by G.~Higman~\cite{Higman} and C.~C.~Sims \cite{Sims}
to equal
$p^{\frac{2}{27}n^3+O(n^{8/3})}$ with $n\to\infty$, 
and it is a folklore conjecture that almost all finite groups are nilpotent in the sense that
\[
  \frac{\text{the number of nilpotent groups of order at most $n$}}{\text{the number of groups of order at most $n$}}=1+o(1),
\]
which, in view of our result, would mean that Conjecture~\ref{General:Conjecture} holds for almost all finite groups.

Conjecture~\ref{General:Conjecture} is readily verified 
for rational groups, such as Weyl groups, and
all groups of order $<2^9$, and although $\mrow(G)=1/2$ for
certain dihedral groups, the second inequality is strict in all known cases.
The author suspects that both inequalities are strict for all finite simple groups:
\begin{conjecture}\label{Simple:Conjecture}
 $\mrow(G)$ and $\mcol(G)$ are $>1/2$ for every finite~simple~group~$G$.
\end{conjecture}
We verify Conjecture~\ref{Simple:Conjecture} for 
$\Alt_n$, $L_2(q)$, $Suz(2^{2n+1})$, $Ree(3^{2n+1})$,
all sporadic groups, and all simple groups of order $\leq 10^9$.
We also show that both 
$\mrow(Suz(2^{2n+1}))$
and $\mcol(Suz(2^{2n+1}))$ tend to $1/2$ as $n\to\infty$.
In particular, the answers to Questions~\ref{Row:Question} and~\ref{Column:Question} must lie
between $1/3$ and~${1/2}$.

\section{Nilpotent groups}
We begin with our results on finite nilpotent groups.

\begin{theorem}\label{Zeros:Theorem}
  For each finite nilpotent group $G$,
  and each $\chi\in\Irr(G)$ with $\chi(1)>1$, 
  $\chi(g)=0$ for more than half of the elements $g\in G$. 
\end{theorem}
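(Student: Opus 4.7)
The plan is to reduce to $p$-groups via the Sylow decomposition, then invoke Taketa's theorem that every irreducible character of a nilpotent group is monomial, and finally to bound the support of an induced character. For the reduction: since the finite nilpotent group $G$ is the direct product of its Sylow subgroups $P_p$, every $\chi\in\Irr(G)$ factors as $\chi=\bigotimes_p\chi_p$ with $\chi_p\in\Irr(P_p)$, and $\chi(g)=\prod_p\chi_p(g_p)$. If $\chi(1)>1$, some $\chi_p$ is nonlinear, and proving that such a $\chi_p$ vanishes on more than half of $P_p$ yields the same conclusion for $\chi$ on $G$. So we may assume $G$ is itself a $p$-group.

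By Taketa's theorem, $\chi=\lambda^G$ for a linear character $\lambda$ of a proper subgroup $H<G$ with $[G:H]=\chi(1)=p^k$, $k\geq 1$. An induced character vanishes off the union of conjugates of the inducing subgroup, so the set where $\chi\neq 0$ lies in $\bigcup_{x\in G}xHx^{-1}$. Since $G$ is a $p$-group, the normalizer of the proper subgroup $H$ strictly contains it, giving $[G:N_G(H)]\leq p^{k-1}$; counting non-identity elements,
$$\Bigl|\bigcup_{x\in G}xHx^{-1}\Bigr|\leq p^{k-1}(|H|-1)+1=\frac{|G|}{p}-p^{k-1}+1.$$
If $p\geq 3$, this is at most $|G|/3$, and if $p=2$ with $k\geq 2$, it is at most $|G|/2-1$; in either subcase $\chi$ vanishes on strictly more than $|G|/2$ elements.

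The remaining case, $p=2$ and $k=1$, is the main obstacle: here $H\triangleleft G$ has index $2$, the union bound above is tight at $|G|/2$, and the extra zeros must be extracted from inside $H$ itself. With $s\in G\setminus H$, one has $\chi(h)=\lambda(h)+\lambda(s^{-1}hs)=\lambda(h)\bigl(1+\mu(h)\bigr)$ for $h\in H$, where $\mu=\lambda^s\lambda^{-1}$ is a character of $H$; irreducibility of $\chi=\lambda^G$ forces $\mu\neq 1$. Since $H$ is a $2$-group, the image of $\mu$ in $\mathbb{C}^\times$ is a nontrivial cyclic subgroup of $2$-power order, hence contains $-1$, so $\mu^{-1}(-1)$ is a nonempty coset of $\ker\mu$ on which $\chi$ vanishes. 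Combined with the zeros on $G\setminus H$, this yields strictly more than $|G|/2$ zeros, completing the proof.
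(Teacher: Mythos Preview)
Your proof is correct and takes a genuinely different route from the paper's.

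The paper proves the stronger pointwise dichotomy that for a finite nilpotent $G$ and nonlinear $\chi\in\Irr(G)$, every value $\chi(g)$ is either $0$ or satisfies $\M(\chi(g))\geq 2$ (Lemma~\ref{nilpotent:Lemma}); averaging the row orthogonality relation $\sum_g|\chi(g)|^2=|G|$ over ${\rm Gal}(\Q(\zeta_{|G|})/\Q)$ then forces more than half the values to vanish. Establishing that dichotomy is the work: after reducing to $p$-groups it relies on results of Cassels and Loxton about sums of roots of unity and a conductor-minimization argument (Theorem~\ref{Thm2}, Lemma~\ref{Lemma:RootOfUnity}).

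You instead exploit monomiality. After the same reduction to $p$-groups, you write $\chi=\lambda^G$ and bound the support of an induced character by the union of conjugates of $H$, using the normalizer-growth property of $p$-groups to get $[G:N_G(H)]\leq p^{k-1}$. The only delicate case, $p=2$ and $[G:H]=2$, you dispatch cleanly by noting that $\mu=\lambda^s\lambda^{-1}$ is a nontrivial linear character of the $2$-group $H$, so $-1$ lies in its image and $\chi$ has a whole coset of extra zeros inside $H$. (A nomenclature quibble: monomiality of nilpotent groups is standard but is not what is usually called Taketa's theorem; Taketa's theorem is the converse direction that $M$-groups are solvable.)

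What each approach buys: yours is entirely elementary character theory, needing no algebraic-number-theoretic input, and in fact for $p\geq 3$ it gives the stronger conclusion that $\chi$ vanishes on at least a $(1-1/p)$-fraction of $G$. The paper's approach, on the other hand, produces the value-by-value lower bound $\M(\chi(g))\geq 2^{\omega(\chi)}$, which immediately yields the sharper inequality of Theorem~\ref{Nilpotent:Omega:Theorem} and, crucially, is reusable for the column statement (Theorem~\ref{Dual:Zeros:Theorem}); your inducing-subgroup argument is tied to a single $\chi$ and does not obviously transfer to the column side.
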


\begin{theorem}\label{Dual:Zeros:Theorem}
  Let $G$ be a finite nilpotent group, and let ${g\in G}$.\newline
  If ${|g^G|>\frac{|G|}{|{\rm Cl}(G)|}}$, then
  $\chi(g)=0$ for more than half of the nonlinear ${\chi\in\Irr(G)}$.
  If ${|g^G|=\frac{|G|}{|{\rm Cl}(G)|}}$,
  then $\chi(g)=0$ for at least half of the nonlinear $\chi\in\Irr(G)$.
\end{theorem}

\begin{corollary}\label{Main:Theorem}
$\mrow(G)$ and $\mcol(G)$ are $> 1/2$ whenever $G$ is nilpotent.
\end{corollary}

The key ingredient in the proof of Theorems~\ref{Zeros:Theorem} and \ref{Dual:Zeros:Theorem} is Theorem~\ref{nilpotent:Lemma}, which will replace
the result of Siegel used by Thompson and Gallagher.
Its proof relies on some auxiliary results of independent interest
and is based on arithmetic in cyclotomic fields.

For each positive integer $k$, we denote by $\zeta_k$
a primitive $k$-th root of unity.
For any algebraic integer $\alpha$ contained in some cyclotomic field,
we denote by $\len(\alpha)$ the least integer $l$ such that $\alpha$
is a sum of $l$ roots of unity, by $\co(\alpha)$ the
least positive integer $k$ such that $\alpha\in\Q(\zeta_k)$, and by
$\M(\alpha)$ the normalized trace
\[\frac{1}{[\Q(|\alpha|^2):\Q]}{\rm Tr}_{\Q(|\alpha|^2)/\Q}(|\alpha|^2),\]
so for any cyclotomic field $\Q(\zeta)$ containing $\alpha$,
\[\M(\alpha)=\frac{1}{|{\rm Gal}(\Q(\zeta)/\Q)|}\sum_{\sigma\in{\rm Gal}(\Q(\zeta)/\Q)} |\sigma(\alpha)|^2.\]

\begin{lemma}\label{coeff:cong}
  Let $a_1,a_2,\ldots,a_l$ and $b_1,b_2,\ldots,b_m$ be rational integers,
  and let $\alpha_1,\alpha_2,\ldots,\alpha_l$ and $\beta_1,\beta_2,\ldots,\beta_m$ be $p^n$-th roots of unity with
   $p$ prime and $n$ nonnegative.
  If 
  \[\sum_{j=1}^l a_j \alpha_j =\sum_{k=1}^m b_k \beta_k,\]
  then
  \[\sum_{j=1}^l a_j \equiv \sum_{k=1}^m b_k \pmod p.\]
\end{lemma}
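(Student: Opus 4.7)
The plan is to work inside the ring of cyclotomic integers $R=\mathbb{Z}[\zeta_{p^n}]$ and reduce modulo the prime $\pi=1-\zeta_{p^n}$ that lies over $p$. I would first collect the two facts I need: (i) every $p^n$-th root of unity $\gamma=\zeta_{p^n}^j$ satisfies $\gamma\equiv 1\pmod{\pi}$, because
\[
1-\zeta_{p^n}^j=(1-\zeta_{p^n})(1+\zeta_{p^n}+\cdots+\zeta_{p^n}^{j-1}),
\]
and (ii) $\pi R\cap\mathbb{Z}=p\mathbb{Z}$. For (ii) I would use $\Phi_{p^n}(1)=p$, which gives the factorisation $p=\prod_{j\in(\mathbb{Z}/p^n)^{\times}}(1-\zeta_{p^n}^j)$, whose factors differ from $\pi$ by units $(1-\zeta_{p^n}^j)/(1-\zeta_{p^n})=1+\zeta_{p^n}+\cdots+\zeta_{p^n}^{j-1}$ (units because swapping the roles of $j$ and an inverse mod $p^n$ gives an inverse element of $R$). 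Consequently $p=u\,\pi^{\varphi(p^n)}$ for a unit $u\in R^{\times}$, so $\pi$ is a prime of $R$ with residue characteristic $p$; thus $\pi R\cap\mathbb Z=p\mathbb Z$.

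With these in hand, the lemma drops out in one line. Rewriting the hypothesis as
\[
\sum_{k=1}^l a_k\alpha_k-\sum_{k=1}^m b_k\beta_k=0
\]
and replacing each $\alpha_k$ and $\beta_k$ by $1$ modulo $\pi$ using (i), I get
\[
\sum_{k=1}^l a_k-\sum_{k=1}^m b_k\equiv 0\pmod{\pi R}.
\]
The left-hand side is a rational integer, so by (ii) the congruence holds modulo $p$, which is the claim.

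The only step that requires any care is the verification that $\pi$ is a prime of $R$ lying over $p$; once this is granted, the argument is purely formal. If a more elementary formulation is preferred, one can avoid speaking of primes altogether and simply observe that the equality $p=\prod_{j}(1-\zeta_{p^n}^j)=u\,(1-\zeta_{p^n})^{\varphi(p^n)}$ implies that any rational integer divisible by $1-\zeta_{p^n}$ in $R$ is divisible by $p$ in $\mathbb{Z}$, which is all that is actually used.
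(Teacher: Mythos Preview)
Your argument is correct and rests on the same underlying fact as the paper's proof, namely that sending $\zeta_{p^n}\mapsto 1$ induces a ring homomorphism $\mathbb{Z}[\zeta_{p^n}]\to\mathbb{Z}/p\mathbb{Z}$. The paper packages this more elementarily: it forms the integer polynomial $P(x)=\sum_k a_kx^{r_k}-\sum_k b_kx^{s_k}$ (where $\alpha_k=\zeta_{p^n}^{r_k}$, $\beta_k=\zeta_{p^n}^{s_k}$), observes that $P(\zeta_{p^n})=0$ forces $\Phi_{p^n}(x)\mid P(x)$ in $\mathbb{Z}[x]$, and then evaluates at $x=1$ to obtain $p=\Phi_{p^n}(1)\mid P(1)=\sum_k a_k-\sum_k b_k$. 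Your version makes the arithmetic content explicit---the totally ramified prime $\pi=1-\zeta_{p^n}$ above $p$---at the cost of invoking or re-deriving standard facts about $\mathbb{Z}[\zeta_{p^n}]$; the paper's version avoids prime ideals entirely and needs only that $\Phi_{p^n}$ is the minimal polynomial of $\zeta_{p^n}$ over $\mathbb{Q}$. One minor point: your setup tacitly assumes $n\ge 1$ (otherwise $\pi=0$), so the trivial case $n=0$ should be disposed of separately, as the paper does.
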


\begin{proof}[Proof of Lemma~\ref{coeff:cong}]
  If $n=0$, then there is nothing to prove, so assume $n\geq 1$.
  Let $\zeta$ be a primitive $p^n$-th root of unity. For each $\alpha_j$ and $\beta_k$, let
  $r_j$ and $s_k$ be nonnegative integers such that $\alpha_j=\zeta^{r_j}$ and $\beta_k=\zeta^{s_k}$.
  Put
  \[
    P(x)=\sum_{j=1}^l a_jx^{r_j} - \sum_{k=1}^m b_kx^{s_k}.
  \]
  Then $P(\zeta)=0$, so $P(x)$ is divisible in $\mathbb Z[x]$ by
  the cyclotomic polynomial 
  \[\Phi_{p^n}(x)=\Phi_p(x^{p^{n-1}}).\]
  Hence $P(1)\equiv 0\pmod p$. 
\end{proof}

\begin{proposition}\label{Thm2}
  Let $G$ be a finite group, let $\chi\in\Irr(G)$, 
  and let $g$ be an element of $G$ with order a power of a prime $p$.
  If $p=2$ or $\chi(1)\not\equiv \pm 2\pmod p$, 
  then either $\chi(g)=0$, $\chi(g)$ is a root of unity, or $\M(\chi(g))\geq 2$.
\end{proposition}

\begin{proof}[Proof of Proposition~\ref{Thm2}]
  Suppose that $p=2$ or $\chi(1)\not\equiv \pm 2\pmod p$. 
  Let $p^n$ be the order of $g$, and let $\zeta$ be a primitive $p^n$-th root of unity, so 
  $\chi(g)\in \Q(\zeta)$.
  Let $\alpha=\zeta^m\chi(g)$ with $m$ such that 
  \begin{equation}\label{conduct:beta}
    \co(\alpha)=\min_k \co(\zeta^k\chi(g)).
  \end{equation}
  We will show that either $\alpha=0$, $\alpha$ is a root of unity, or
  $\M(\alpha)\geq 2$.

  Let $P=\co(\alpha)$. Using  
  $\Q(\zeta_k)\cap \Q(\zeta_l)=\Q(\zeta_{(k,l)})$, 
  then $P$ divides $p^n$. 
  If $P=1$, then $\alpha$ is rational and the conclusion follows.
  If $P$ is divisible by $p^2$, then for $\gamma$ a primitive $P$-th root of unity,
  $\alpha$ is uniquely of the shape
  \begin{equation}\label{beta:shape}
    \alpha=\sum_{k=0}^{p-1}\alpha_k\gamma^k,\quad\alpha_k\in\Q(\zeta_{P/p}),
  \end{equation}
  the $\alpha_k$ are algebraic integers, and 
  a straightforward calculation \cite[p.\ 115]{Cassels}
  shows that $\M(\alpha)$ is at least the number of nonzero
  $\alpha_k$. By \eqref{conduct:beta}, at least two of the
  $\alpha_k$ are nonzero. Hence $\M(\alpha)\geq 2$ if $p^2\mid P$. 

  It remains to consider the case $P=p$.
  Since $\Q(\zeta_2)=\Q(\zeta_1)$, we must have $p>2$.
  If $\len(\alpha)=0$, then $\alpha=0$; 
  if $\len(\alpha)=1$, then $\alpha$ is a root of unity;
  and if $\len(\alpha)>2$, then $\M(\alpha)\geq 2$ by a result of
  Cassels \cite[Lemma 3]{Cassels}.
  So assume $\len(\alpha)=2$.  Then by \cite[Thm.\ 1(i)]{Loxton},
  $\alpha$ can be written in the shape
  \[\alpha=\epsilon_1 \xi_1 +\epsilon_2 \xi_2,\quad \epsilon_k^2= 1,\]
  where $\xi_1$ and $\xi_2$ are $p$-th roots of unity. 
  If $\xi_1=\xi_2$, then either $\alpha=0$ or $\M(\alpha)= 4$.
  So assume
  \[
    \xi_1\neq \xi_2.
  \]
  By Lemma~\ref{coeff:cong},
  \begin{equation}\label{epsilons}
    \epsilon_1+\epsilon_2\equiv \chi(1)\pmod p.
  \end{equation}
  By \eqref{epsilons} and the fact that $\chi(1)\not\equiv \pm 2\pmod p$,
  \[\epsilon_1+\epsilon_2=0.\]
  Hence, for some root of unity $\rho$ and primitive $p$-th
  root of unity $\xi$, 
  \[\alpha=(\xi-1)\rho.\]
  Hence
  \[
    \M(\alpha)=\M(\xi-1)
    =2-\frac{1}{p-1}\sum_{k=1}^{p-1}( \xi^{k}+\xi^{-k})
    =2+\frac{2}{p-1}>2.
    \qedhere
  \]
\end{proof}

\begin{lemma}\label{Lemma:RootOfUnity}
  Let $G$ be a finite group, let $\chi\in\Irr(G)$, 
  and let $g$ be an element of $G$ with order a power of a prime $p$.
  If $\chi(1)\not\equiv\pm 1\pmod p$, then $\chi(g)$ is not a root of unity.
\end{lemma}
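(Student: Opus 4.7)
The plan is to argue by contradiction: assuming $\chi(g)$ is a root of unity, Lemma~\ref{coeff:cong} should force $\chi(1)\equiv\pm 1\pmod p$, contradicting the hypothesis.

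First I would fix a representation $\rho$ of $G$ affording $\chi$. Since $g$ has order $p^n$, the matrix $\rho(g)$ satisfies $\rho(g)^{p^n}=I$ and is therefore diagonalizable with eigenvalues $\eta_1,\dots,\eta_{\chi(1)}$ that are all $p^n$-th roots of unity. In particular,
\[
\chi(g)=\eta_1+\cdots+\eta_{\chi(1)}\in\Q(\zeta_{p^n}).
\]

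Now suppose for contradiction that $\chi(g)$ is a root of unity. Every root of unity in $\Q(\zeta_{p^n})$ has the form $\epsilon\xi$ with $\epsilon\in\{\pm 1\}$ and $\xi$ a $p^n$-th root of unity (when $p=2$ the sign can even be absorbed into $\xi$, but that causes no trouble). Writing $\chi(g)=\epsilon\xi$ in this form and applying Lemma~\ref{coeff:cong} to
\[
\eta_1+\cdots+\eta_{\chi(1)}=\epsilon\xi
\]
immediately yields $\chi(1)\equiv\epsilon\pmod p$, and hence $\chi(1)\equiv\pm 1\pmod p$, contradicting the hypothesis.

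I do not anticipate a real obstacle here: once both sides are presented as integer combinations of $p^n$-th roots of unity, Lemma~\ref{coeff:cong} does all the work. The only point requiring even a moment's care is the identification of roots of unity in $\Q(\zeta_{p^n})$, which is standard, together with the minor bookkeeping of the sign $\epsilon$ that is forced because $\pm 1$ are the only rational roots of unity.
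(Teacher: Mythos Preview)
Your proposal is correct and matches the paper's proof almost exactly: the paper also writes $\chi(g)=\epsilon\xi$ with $\epsilon\in\{1,-1\}$ and $\xi$ a $p^n$-th root of unity, then invokes Lemma~\ref{coeff:cong} to conclude $\chi(1)\equiv\pm 1\pmod p$. Your version is slightly more explicit in displaying $\chi(g)$ as the sum of the $\chi(1)$ eigenvalues of $\rho(g)$, which makes the application of Lemma~\ref{coeff:cong} cleaner, but the argument is the same.
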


\begin{proof}[Proof of Lemma~\ref{Lemma:RootOfUnity}]
  Let $p^n$ be the order of $g$, so $\chi(g)\in \Q(\zeta_{p^n})$, and suppose 
  that $\chi(g)$ is a root of unity. Since the roots of unity in a given
  cyclotomic field $\mathbb Q(\zeta_k)$ are the $l$-th roots of unity for
  $l$ the least common multiple of $2$ and $k$, we then have
  \[\chi(g)=\epsilon\xi\]
  for some $\epsilon\in\{1,-1\}$
  and $p^n$-th root of unity $\xi$. 
  So by Lemma~\ref{coeff:cong}, either $\chi(1)\equiv 1\pmod p$ or
  $\chi(1)\equiv -1\pmod p$.
\end{proof}

\begin{lemma}\label{p:Lemma}
  Let $G$ be a finite group of prime-power order, let $g\in G$,
  and let $\chi\in\Irr(G)$.
  If $\chi(1)>1$, then either $\chi(g)=0$ or $\M(\chi(g))\geq 2$.
\end{lemma}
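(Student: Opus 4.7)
The plan is to combine Theorem~\ref{Thm2} with Lemma~\ref{Lemma:RootOfUnity}. First I would observe that since $G$ is a $p$-group every element of $G$ has $p$-power order, so in particular $g$ does; and $\chi(1)$ is a power of $p$ because it divides $|G|$. Together with the hypothesis $\chi(1)>1$, this forces $\chi(1)\equiv 0\pmod p$.

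Next I would check the congruence hypotheses of the two results against this divisibility. For Theorem~\ref{Thm2}, the case $p=2$ is automatic, and for $p$ odd we have $\pm 2\not\equiv 0\pmod p$, so $\chi(1)\equiv 0$ gives $\chi(1)\not\equiv \pm 2\pmod p$; the theorem therefore yields that $\chi(g)=0$, $\chi(g)$ is a root of unity, or $\M(\chi(g))\geq 2$. For Lemma~\ref{Lemma:RootOfUnity}, the hypothesis $\chi(1)\not\equiv \pm 1\pmod p$ is immediate since $\pm 1\not\equiv 0\pmod p$ for every prime $p$ (including $p=2$, where $\pm 1\equiv 1$ while $\chi(1)$ is even); the lemma then excludes the root of unity case.

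Combining the two leaves only the alternatives $\chi(g)=0$ and $\M(\chi(g))\geq 2$, as required. There is no genuine obstacle here: the lemma is essentially a corollary of the two previous results, made possible by the single observation that in a $p$-group the condition $\chi(1)>1$ makes $\chi(1)$ divisible by $p$, automatically sidestepping every congruence exception in the hypotheses.
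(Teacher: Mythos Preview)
Your argument is correct and is exactly the paper's approach: in a $p$-group each $g$ has $p$-power order and $\chi(1)$ is a power of $p$, so $\chi(1)>1$ gives $p\mid\chi(1)$, after which Theorem~\ref{Thm2} and Lemma~\ref{Lemma:RootOfUnity} apply verbatim. The only difference is that you spell out the congruence verifications $\chi(1)\not\equiv\pm1,\pm2\pmod p$ more explicitly than the paper does.
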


\begin{proof}[Proof of Lemma~\ref{p:Lemma}]
  If $|G|=p^n$ with $p$ prime, then each $g\in G$ has order a power of $p$, and
  each $\chi\in\Irr(G)$ has degree a power of $p$.
  So if $\chi(1)>1$, then by Proposition~\ref{Thm2} and
  Lemma~\ref{Lemma:RootOfUnity}, for each $g\in G$, 
  $\chi(g)=0$ or $\M(\chi(g))\geq 2$. 
\end{proof}

For any character $\chi$ of a finite group, let
\[\omega(\chi)=|\{\text{primes dividing $\chi(1)$}\}|.\]

\begin{theorem}\label{nilpotent:Lemma}
  Let $G$ be a finite nilpotent group, let $\chi\in\Irr(G)$, and let $g\in G$. Then
  \begin{equation}
    \chi(g)=0\quad\text{or}\quad\M(\chi(g))\geq 2^{\omega(\chi)}.
  \end{equation}
\end{theorem}

\begin{proof}[Proof of Theorem~\ref{nilpotent:Lemma}]
  If $|G|=1$, then $\chi(g)=\chi(1)=1$, so assume $|G|>1$. 
  Since $G$ is nilpotent, it is the direct product of its
  nontrivial Sylow subgroups $P_1,P_2,\ldots,P_n$.
  Let $g_1,g_2,\ldots,g_n$ be the unique sequence with $g_k\in P_k$ and
  \[g=g_1g_2\ldots g_n.\]
  For each $P_k$, let $\chi_k\in\Irr(P_k)$ be the unique irreducible
  constituent of the restriction of $\chi$ to $P_k$.
  Then
  \begin{equation}\label{Sylow:char:product}
    \chi(g)=\chi_1(g_1)\chi_2(g_2)\ldots \chi_n(g_n),\quad \chi(1)=\chi_1(1)\chi_2(1)\ldots \chi_n(1),
  \end{equation}
  \begin{equation}\label{dk:divides:Pk}
    \chi_k(1)\text{ divides }|P_k|,
  \end{equation}
  \begin{equation}\label{prime:Sylows}
    (|P_j|,|P_k|)=1\quad\text{for}\quad j\neq k,
  \end{equation}
  and 
  \begin{equation}\label{Sylow:Field}
    \chi_k(g_k)\in\mathbb Q(\zeta_{|P_k|}).
  \end{equation}
  For any algebraic integers $\alpha\in\mathbb Q(\zeta_l)$ and
  $\beta\in\mathbb Q(\zeta_m)$ with $(l,m)=1$,
  we have $\mathbb Q(\zeta_{lm})=\mathbb Q(\zeta_l)\mathbb Q(\zeta_m)$ and
  $\mathbb Q(\zeta_l)\cap\mathbb Q(\zeta_m)=\mathbb Q$, and hence
  \begin{equation}\label{Mult:M}
    \M(\alpha\beta)=\M(\alpha)\M(\beta).
  \end{equation}
  By \eqref{Sylow:char:product}, \eqref{prime:Sylows}, \eqref{Sylow:Field}, and \eqref{Mult:M},
  \begin{equation}\label{M:Product}
    \M(\chi(g))=\M(\chi_1(g_1))\M(\chi_2(g_2))\ldots\M(\chi_n(g_n)).
  \end{equation}
  By \eqref{M:Product} and Lemma~\ref{p:Lemma}, 
  \[\chi(g)=0\quad\text{or}\quad\M(\chi(g))\geq 2^w,\]
  where $w$ denotes the number of characters $\chi_k$ with $\chi_k(1)>1$.
  From \eqref{Sylow:char:product}, \eqref{dk:divides:Pk}, and \eqref{prime:Sylows},
  $w$ is equal to the number of prime divisors of $\chi(1)$. 
\end{proof}

\begin{proposition}\label{Nilpotent:Omega:Theorem}
  For each finite nilpotent group $G$, and each $\chi\in\Irr(G)$,
  \begin{equation}\label{Vanishing:Omega:Bound}
    \frac{|\{g\in G : \chi(g)=0\}|}{|G|}
    \geq
    1-\frac{1}{2^{\omega(\chi)}}\left(\frac{|G|-\chi(1)^2+2^{\omega(\chi)}}{|G|}\right).
  \end{equation}
\end{proposition}

\begin{proof}[Proof of Proposition~\ref{Nilpotent:Omega:Theorem}]
  Let $G$ be a finite nilpotent group,
  and let $\chi\in\Irr(G)$. 
  By Theorem~\ref{nilpotent:Lemma}, for each $g\in G$, 
  \begin{equation}\label{alt}
    \chi(g)=0\quad\text{or}\quad\M(\chi(g))\geq 2^{\omega(\chi)}.
  \end{equation}
  Now take the relation 
  \[
    |G|=\sum_{g\in G} |\chi(g)|^2,
  \]
  apply the elements $\sigma$ of the Galois group
  $\mathcal G={\rm Gal}(\Q(\zeta_{|G|})/\Q)$, and average over~$\mathcal G$.
  This gives
  \begin{equation}\label{row:M}
    |G|=\sum_{g\in G}\M(\chi(g)).
  \end{equation}
  From \eqref{alt} and \eqref{row:M}, 
  \begin{equation}\label{row:M:ineq}
    |G|\geq \chi(1)^2+2^{\omega(\chi)}|\{g\in G: \chi(g)\neq 0\}|-2^{\omega(\chi)}.
  \end{equation}
  By \eqref{row:M:ineq}, we have \eqref{Vanishing:Omega:Bound}.
\end{proof}

\begin{proof}[Proof of Theorem~\ref{Zeros:Theorem}]
  By Proposition~\ref{Nilpotent:Omega:Theorem}.
\end{proof}

\begin{proof}[Proof of Theorem~\ref{Dual:Zeros:Theorem}]
  Taking the relation 
  \[\frac{|G|}{|g^G|}=\sum_{\chi\in\Irr(G)} |\chi(g)|^2,\]
  applying the elements $\sigma$ of the Galois group
  $\mathcal G={\rm Gal}(\Q(\zeta_{|G|})/\Q)$, and averaging over~$\mathcal G$, 
  we have 
  \begin{equation}\label{col:M}
    \frac{|G|}{|g^G|}=\sum_{\chi\in \Irr(G)}\M(\chi(g)). 
  \end{equation}
  So for $\mathcal L=\{\chi\in\Irr(G): \chi(1)=1\}$ and
  $\mathcal N=\Irr(G)- \mathcal L$, 
  \begin{equation}\label{col:L:M}
    \frac{|G|}{|g^G|}= |\mathcal L|+\sum_{\chi\in \mathcal N}\M(\chi(g)). 
  \end{equation}
  By Theorem~\ref{nilpotent:Lemma}, for each $\chi\in \mathcal N$,
  \begin{equation}\label{column:pf:zero:two}
    \chi(g)=0\quad\text{or}\quad\M(\chi(g))\geq 2.
  \end{equation}
  From \eqref{col:L:M} and \eqref{column:pf:zero:two}, 
  \begin{equation}\label{L:N:Ineq}
    \frac{|G|}{|g^G|}\geq |\mathcal L|+2|\{\chi\in \mathcal N : \chi(g)\neq 0\}|.
  \end{equation}
  By \eqref{L:N:Ineq}, if $|{\rm Cl}(G)|=|G|/|g^G|$, then
  $|\{\chi\in \mathcal N : \chi(g)=0\}|\geq |\mathcal N|/2$,
  and if $|{\rm Cl}(G)|>|G|/|g^G|$, then
  $|\{\chi\in \mathcal N : \chi(g)=0\}|>|\mathcal N|/2$.
\end{proof}

\begin{proof}[Proof of Corollary~\ref{Main:Theorem}]
 By Theorem~\ref{Zeros:Theorem} and Theorem~\ref{Dual:Zeros:Theorem}.
\end{proof}

\section{Simple groups}
We now establish Conjecture~\ref{Simple:Conjecture} for several families of simple groups. 
\begin{theorem}\label{theorem simple groups}\ Let $n>0$.
  \begin{enumerate}[\rm I.]
  \item\label{Alt}
    For $G=\Alt_n$, we have
    \begin{equation}\label{Alt:Ineq}
      \mrow(G),\mcol(G)>
      \begin{cases} 1/2 & \text{if $n<9$,}\\
        3/4 & \text{if $n\geq 9$.}
      \end{cases}
    \end{equation}
    
  \item\label{Suz}  For $G=Suz(q)$ with $q=2^{2n+1}$, we have 
    \begin{equation}\label{suz:row}
      \mrow(G) = \frac{1}{2}+\frac{(q+1)(q^2+2)}{2q^2(q^2+1)}
    \end{equation}
    and
    \begin{equation}\label{suz:col}
      \mcol(G) = \frac{1}{2}+\frac{5}{2(q+3)},
    \end{equation}
    so $\mrow(G),\mcol(G)>1/2$ and
    \begin{equation}\label{Suz:Limits}
      \mrow(G),\mcol(G)\to 1/2\text{ as }q\to\infty.
    \end{equation}
    
  \item\label{L}
    For $G=L_2(q)$ with $q=p^n$ a prime power, we have $\mrow(G),\mcol(G)>1/2$.
    
  \item\label{Ree}  For $G=Ree(3^{2n+1})$, we have $\mrow(G),\mcol(G)>1/2$.
    
  \item\label{Spor} For each sporadic group $G$, we have $\mrow(G),\mcol(G)>1/2$.
    
  \item\label{Small} For each finite simple group $G$ of order $\leq 10^9$, we have
    ${\mrow(G),\mcol(G)>1/2}$.
  \end{enumerate}
\end{theorem}

\begin{corollary}\label{corollary}
$\inf\{\mrow(G): |G|<\infty\},\inf\{\mcol(G):|G|<\infty\}\in [1/3,1/2]$.
\end{corollary}
\begin{proof}[Proof of Corollary \ref{corollary}]
  Thompson and Gallagher give the lower bound of $1/3$. 
  The upper bound of $1/2$ follows from part \ref{Suz} of Theorem~\ref{theorem simple groups}.
\end{proof}

\begin{proof}[Verification of \ref{Alt}]
  \eqref{Alt:Ineq} holds up
  to $n=14$, so assume $n\geq 15$.
  In the character table of $\Alt_n$, the values
  are rational integers, except some values
  $\chi(g)$ with  
  \begin{equation*}
    |\chi(g)|^2=\frac{1+\lambda_1\lambda_2\ldots}{4}
  \end{equation*}
  for some partition $\lambda$ of $n$ into distinct odd parts
  $\lambda_1>\lambda_2>\dots$.  Since $n\geq 15$, it follows that 
  each pair $(\chi,g)\in\Irr(G)\times G$ satisfies
  \begin{equation}\label{alt:options}
    \chi(g)=0,\ |\chi(g)|=1,\ \text{or}\ |\chi(g)|^2\geq 4.
  \end{equation}
  Using~\eqref{alt:options} 
  and the fact that simple groups do not have irreducible characters of degree 2, 
  we get that each nonprincipal $\chi\in\Irr(G)$ satisfies
  \begin{equation}\label{rat:bound}
    |G|>|\{g\in G : |\chi(g)|=1\}|+4|\{g\in G : |\chi(g)|\neq 0,1\}|,
    \end{equation}
  and from \eqref{rat:bound} it follows that $\mrow(G)>3/4$. 
  Similarly, for any class $g^G$ with $|g^G|\geq |G|/|{\rm Cl}(G)|$, we have
  \[
    |{\rm Cl}(G)|
    \geq
    |\{\chi\in \Irr(G) : |\chi(g)|=1\}|
    +
    4|\{\chi\in \Irr(G) : |\chi(g)|\neq 0,1\}|,
  \]
  and hence $\mcol(G)>3/4$.
\end{proof}

\begin{proof}[Verification of \ref{Suz}]
Let $n\geq 2$, $r=2^n$, $q=2^{2n-1}$, and $G=Suz(q)$, so
\begin{equation*}
  |G|=q^2(q-1)(q^2+1)=q^2(q-1)(q-r+1)(q+r+1).
\end{equation*}
Maintaining the notation of Suzuki \cite{Suzuki},
there are elements $\sigma,\rho,\xi_0,\xi_1,\xi_2$ such that 
each element of $G$ can be conjugated into exactly one of the sets 
\[1^G,\sigma^G,\rho^G,(\rho^{-1})^G, A_0-\{1\},A_1-\{1\},A_2-\{1\},\]
where $A_i=\langle \xi_i\rangle $ ($i=1,2,3$), and
the irreducible characters of $G$ are given by the following table
\cite[Theorem 13]{Suzuki}:
\[
  \begin{matrix}
    & 1 & \sigma & \rho,\rho^{-1} & \xi_0^t\neq1 & \xi_1^t\neq1 & \xi_2^t\neq1 \\
    1 & 1 & 1  & 1& 1 & 1 & 1 \\
    X & q^2 & 0 & 0 & 1 & -1 & -1\\
    X_i & q^2+1 & 1 & 1 & \epsilon_0^i(\xi_0^t) & 0 & 0 \\
    Y_j & (q-r+1)(q-1) & r-1 & -1 & 0 & -\epsilon_1^j(\xi_1^t) & 0\\
    Z_k & (q+r+1)(q-1) & -r-1 & -1 & 0 & 0 & -\epsilon_2^k(\xi_2^t) \\
    W_l &\frac{r(q-1)}{2} & -\frac{r}{2} & \pm \frac{r\sqrt{-1}}{2} & 0 & 1 & -1
  \end{matrix}
\]
\noindent Here, $1\leq i\leq \frac{q}{2}-1$, $1\leq j\leq \frac{q+r}{4}$,
$1\leq k\leq \frac{q-r}{4}$, $1\leq l\leq 2$, 
\begin{equation}\label{epsilon0}
  \epsilon_0^i(\xi_0^t)=\zeta^{it}+\zeta^{-it},\quad \zeta=e^{2\pi \sqrt{-1}/(q-1)},
\end{equation}
and $\epsilon_1^j$ and $\epsilon_2^k$ are certain characters on $A_1$ and
$A_2$. The $A_i$'s satisfy 
\begin{equation}\label{Suzuki:Ai's}
  |A_0|=q-1,\quad |A_1|=q+r+1,\quad |A_2|=q-r+1,
\end{equation}
and
denoting by $G_i$ ($i=0,1,2$) the set of elements $g\in G$
that can be conjugated into $A_i-\{1\}$, we have
\begin{equation}\label{Suzuki:Gi's}
  |G_i|=\frac{|A_i|-1}{l_i}\frac{|G|}{|A_i|},
\end{equation}
where $l_0=2$ and $l_1=l_2=4$.

Let $\gamma_s=\zeta^s+\zeta^{-s}$ with
$\zeta=e^{2\pi \sqrt{-1}/(q-1)}$ and $s\in\mathbb Z$.
Then
\[
  |\gamma_s|=1\Leftrightarrow
  6s\pm (q-1)\equiv 0 \pmod{3(q-1)}
\]
and
\[
  \gamma_s=0\Leftrightarrow 
  4s\pm (q-1) \equiv 0\pmod{2(q-1)}.
\]
Since $q-1\equiv 1 \pmod{3}$, and $q-1\equiv 1 \pmod{2}$, it follows that
\begin{equation}
  |\gamma_s|\not\in\{0,1\}\text{ for all }s\in \mathbb Z.
\end{equation}
So for any $X_i$, 
\begin{equation}\label{X1}
  |\{g\in G : X_i(g)\ \text{is a root of unity or zero}\}| = |G|-|G_0|-1, 
\end{equation}
and for any $g\in G_0$, 
\begin{equation}\label{Xi0}
  |\{\chi\in \Irr(G) : \chi(g)\ \text{is a root of unity or zero}\}|=\frac{q}{2}+4.
\end{equation}

By~\eqref{X1} and \eqref{Suzuki:Ai's}--\eqref{Suzuki:Gi's}, 
\begin{equation}\label{mr:bound}
  \mrow(G)\leq \frac{1}{2}+\frac{(q+1)(q^2+2)}{2q^2(q^2+1)}.
\end{equation}
Equality must hold in \eqref{mr:bound} because 
\[
  |\{g\in G : W_l(g)\in\{0,1,-1\}|=|G_0|+|G_1|+|G_2|>|G|-|G_0|-1
\]
and for any $\chi\in\Irr(G)-\{X_i\}-\{W_l\}$,
\[
  |\{g\in G : \chi(g)\in\{0,1,-1\}\}|\geq 2|\rho^G|+|G_0|+|G_2|>|G|-|G_0|-1.
\]

By~\eqref{Xi0} and the fact that, for any $g\in G_0$, 
$|C_G(g)|=q-1<q+3=|{\rm Cl}(G)|$, we have 
\begin{equation}\label{mc:bound}
  \mcol(G)\leq \frac{1}{2}+\frac{5}{2(q+3)}.
\end{equation}
Equality must hold in \eqref{mc:bound} because 
$1^G$, $\sigma^G$, and $\rho^G$ have size $<|G|/|{\rm Cl}(G)|$, 
and for any $g\in G_1\cup G_2$, 
\[
  |\{\chi\in \Irr(G) : \chi(g)\in\{0,1,-1\}\}|\geq \frac{3q-r+12}{4}\geq\frac{q}{2}+4.
  \qedhere
\]
\end{proof}

\begin{proof}[Verification of \ref{L}]
Let $q=p^n$ with $p$ prime, $G=L_2(q)$, let $R$ and $S$ be as in 
\cite[pp.~402--403]{Jordan}, and let $G_0$ (resp.\ $G_1$)
be the set of nonidentity elements $g\in G$ that
can be conjugated into $\langle R\rangle$ (resp.\ $\langle S\rangle$).

Assuming first $p\neq 2$, then 
\begin{alignat}{2}
  |G|&=\frac{q(q^2-1)}{2},&\quad |{\rm Cl}(G)|&=\frac{q+5}{2},\\
  |G_0|&=\frac{q(q+1)(q-3)}{4},&\quad |G_1|&=|G_0|+q=\frac{q(q-1)^2}{4},
\end{alignat}
and $G-G_0\cup G_1$ consists of 3 classes: $1^G$, $a^G$, $b^G$,
with  $|C_G(a)|=|C_G(b)|=q$. 
Inspecting Jordan's table \cite[p.\ 402]{Jordan}, each $\chi\in\Irr(G)$ satisfies either
\begin{enumerate}[(i)]
\item $\chi(g)\in \{0,1,-1\}$ on $G_0\cup G_1$, or
\item $\chi(g)\in\{1,-1\}$ on $a^G\cup b^G$ and $\chi(g)=0$ on   
  $G_0$ or $G_1$.
\end{enumerate}
If $q>3$, then
\begin{equation}\label{L2:bds}
  |G_0|+|G_1|>|G|/2\quad\text{and}\quad
  |a^G|+|b^G|+|G_0|>|G|/2,
\end{equation}
and if $q=3$, then $G\cong A_4$. So $\mrow(G)>1/2$.
Similarly, 
\[
  |\{\chi\in\Irr(G): \chi(a),\chi(b)\in\{0,1,-1\}\}|=\frac{q+1}{2},
\]
and for $g\in G_0$ (resp.\ $g\in G_1$) and $\chi\in\Irr(G)$, we have
$\chi(g)\in\{0,1,-1\}$ away from the 
$\leq (q-3)/4$ irreducible characters of degree $q+1$ (resp.\
the $\leq (q-1)/4$ characters of degree $q-1$), from which it follows that 
$\mcol(G)>1/2$.

For $p=2$, we have
$|G|=q(q^2-1)$, $|{\rm Cl}(G)|=q+1$, 
\begin{equation}
  |G_0|=\frac{q(q+1)(q-2)}{2},\quad 
  |G_1|=\frac{q^2(q-1)}{2},
\end{equation}
and $G-G_0\cup G_1$ consists of 2 classes: $1^G$ and $a^G$ with $|C_G(a)|=q$.
The irreducible characters of $G$ are given by Jordan \cite[p.\ 403]{Jordan}. 
There is 
the principal character,
$1$ character of degree $q$, $q/2$ 
characters of
degree $q-1$, and $q/2-1$ characters of degree $q+1$.
All the characters satisfy $\chi(g)\in\{0,1,-1\}$ on $a^G$,
the character of degree $q$ is $\pm 1$ on $G_0$ and $G_1$,
the characters of degree $q-1$ vanish on $G_0$, and
the characters of degree $q+1$ vanish on $G_1$. From this, it follows
that $\mrow(G)$ and $\mcol(G)$ are $>1/2$.
\end{proof}

\begin{proof}[Verification of \ref{Ree}] 
Let $n$ be a positive integer, $m=3^n$, $q=3^{2n+1}$, and ${G=Ree(q)}$,
so
\[
  |G|=q^3(q-1)(q+1)(q^2-q+1),\quad 
  |{\rm Cl}(G)|=q+8.
\]
The irreducible characters of $G$ are given by Ward~\cite{Ward}
in a 16-by-16 table, with the last 6 rows being occupied
by 6 families of exceptional characters, the sizes of which are,
from top to bottom, 
\[
  \frac{q-3}{4},\ \frac{q-3}{4},\ \frac{q-3}{24},\ \frac{q-3}{8},\
  \frac{q-3m}{6},\ \frac{q+3m}{6}.
\]
From Ward's table, we find that 
for any class $g^G\not\in\{1^G,X^G,J^G\}$, 
$\chi(g)\in\{ 0,1,-1\}$ for more than half of the
irreducible characters $\chi$ of $G$.
Since the classes $1^G,X^G,J^G$ all have size $<|G|/|{\rm Cl}(G)|$,
we conclude that $\mcol(G)>1/2$.
  
The first step in verifying $\mrow(G)>1/2$ 
is to compute the following table:

\begin{table}[H]
  \caption{}\label{dist}
  {\renewcommand{\arraystretch}{1.5}\begin{tabular}{@{\hspace{.03\textwidth}}l@{\hspace{.1\textwidth}}c@{\hspace{.03\textwidth}}}
$\Omega\subset G$ & $|\cup_{g\in G} g{\Omega}g^{-1}|$  \\
\hline $\{1\}$ & 1\\
$\langle R\rangle-\{ 1\}$  & $\frac{q^3(q-3)(q^3+1)}{4}$\\
$\langle S\rangle -\{1\}$ & $\frac{q^3(q-1)(q-3)(q^2-q+1)}{24}$\\
${M^-}-\{1\}$ & $\frac{q^3(q-1)(q+1)(q^2-2q-3m)}{6}$\\
$M^+-\{ 1\}$ & $\frac{q^3(q-1)(q+1)(q^2-2q+3m)}{6}$\\
$\{X\}$ & $|G|/q^3$\\ 
$\{Y\}$ & $|G|/3q$\\ 
$\{T\}$ & $|G|/2q^2$\\ 
$\{T^{-1}\}$ & $|G|/2q^2$\\ 
$\{YT\}$ & $|G|/3q$\\ 
$\{YT^{-1}\}$ & $|G|/3q$\\ 
$\{JT\}$ & $|G|/2q$\\ 
$\{JT^{-1}\}$ & $|G|/2q$\\ 
$J\langle R\rangle- \{J\}$ & $\frac{q^3(q-3)(q^3+1)}{4}$\\
$J\langle S\rangle -\{J\}$ & $\frac{q^3(q-1)(q-3)(q^2-q+1)}{8}$\\
$\{J\}$ & $|G|/q(q^2-1)$\\
\end{tabular}}
\end{table}

Then with Table~\ref{dist} and Ward's table in hand,
a straightforward check establishes that, for
each $\chi\in\Irr(G)$, 
\[
  |\{g\in G : \chi(g)\in\{0,1,-1\}\}|>|G|/2.
\]
Hence $\mrow(G)>1/2$. 
\end{proof}

\subsubsection*{Verification of \ref{Spor} and \ref{Small}}
Here, in Tables~\ref{sporadic} and \ref{small},
we report the values of $\mrow$ and $\mcol$ for
sporadic groups and simple groups of order $\leq 10^9$.
All values are rounded to the number of digits shown.

\begin{center}
\begin{table}[H]
\caption{The sporadic groups.\label{sporadic}}
\begin{tabular}{|@{\hspace{.03\textwidth}}p{{.12\textwidth}}c@{\hspace{.06\textwidth}}c@{\hspace{.03\textwidth}}|}
  \hline  $G$ & $\mrow(G)$ & $\mcol(G)$  \\
  \hline
  $M_{11}$ & 0.7290 & 0.8000 \\
  $M_{12}$ & 0.7955 & 0.8667 \\
  $M_{22}$ & 0.7117 & 0.8333 \\
  $M_{23}$ & 0.6827 & 0.7647 \\
  $M_{24}$ & 0.6913 & 0.7692 \\
  $J_1$   & 0.5583 & 0.6000 \\
  $J_2$   & 0.6373 & 0.6190 \\
  $J_3$   & 0.5840 & 0.7143 \\
  $J_4$   & 0.6925 & 0.7903 \\
  $Co_1$  & 0.8739 & 0.8515 \\
  $Co_2$  & 0.8347 & 0.8333 \\
  $Co_3$  & 0.7528 & 0.8333 \\
  $Fi_{22}$ & 0.8029 & 0.8769 \\
  \hline
\end{tabular}\quad
\begin{tabular}{|@{\hspace{.03\textwidth}}p{{.12\textwidth}}c@{\hspace{.06\textwidth}}c@{\hspace{.03\textwidth}}|}
\hline  $G$ & $\mrow(G)$ & $\mcol(G)$  \\
\hline
  $Fi_{23}$ & 0.8328 & 0.8469 \\
  $Fi_{24}'$ & 0.8808 & 0.8056\\
  $HS$    & 0.7853 & 0.8750 \\
  $McL$   & 0.6722 & 0.8333 \\
  $He$    & 0.7088 & 0.7576\\
  $Ru$    & 0.8517 & 0.8333 \\
  $Suz$   & 0.8141 & 0.8372 \\
  $O\text{'}N$    & 0.6830 & 0.8667\\
  $HN$    & 0.6362 & 0.7593 \\
  $Ly$    & 0.7879 & 0.8491 \\
  $Th$    & 0.7978 & 0.8750 \\
  $B$     & 0.8812 & 0.8587 \\
  $M$     & 0.8855 & 0.8711 \\
  \hline
\end{tabular}
\end{table}\end{center}

\begin{center}
\begin{table}[H]
  \caption{The simple groups of order $\leq 10^9$
    that are not cyclic, 
    $\Alt_n$, $L_2(q)$, $Suz(2^{2n+1})$, $Ree(3^{2n+1})$, or sporadic. 
    \label{small}}
\begin{tabular}{|@{\hspace{.03\textwidth}}p{.12\textwidth}c@{\hspace{.06\textwidth}}c@{\hspace{.03\textwidth}}|}
  \hline
  $G$ & $\mrow(G)$ & $\mcol(G)$ \\
  \hline
  $L_3(3)$ & 0.6736 & 0.8333 \\
  $U_3(3)$ & 0.7049 & 0.8571 \\ 
  $L_3(4)$ & 0.6000 & 0.8000 \\
  $S_4(3)$ &  0.8713 & 0.9000 \\
  $U_3(4)$ & 0.6892 & 0.7273 \\
  $U_3(5)$ & 0.7103 & 0.8571 \\
  $L_3(5)$ & 0.6754 & 0.8667 \\
  $S_4(4)$ & 0.6433 & 0.7037 \\
  $S_6(2)$ & 0.8867 & 0.8333\\
  $L_3(7)$ & 0.6235 & 0.7273 \\
  $U_4(3)$ & 0.7121 & 0.9000 \\
  $G_2(3)$  & 0.8321 & 0.9130 \\
  $S_4(5)$ & 0.6501 & 0.6471 \\
  $U_3(8)$ & 0.5701 & 0.6786 \\
  $U_3(7)$ & 0.6741 & 0.7586 \\
  $L_4(3)$ & 0.6911 & 0.8621 \\
  \hline
\end{tabular}\quad 
\begin{tabular}{|@{\hspace{.03\textwidth}}p{{.12\textwidth}}c@{\hspace{.06\textwidth}}c@{\hspace{.03\textwidth}}|}
  \hline
  $G$ & $\mrow(G)$ & $\mcol(G)$ \\
  \hline
  $L_5(2)$ & 0.7038 & 0.7778 \\
  $U_5(2)$ & 0.8041 & 0.9149 \\
  $L_3(8)$ & 0.6650 & 0.7083 \\
  $^2F_4(2)'$ & 0.7006 & 0.8182 \\
  $L_3(9)$ & 0.5488 & 0.6000 \\
  $U_3(9)$ & 0.6237  & 0.6739  \\
  $U_3(11)$ & 0.5494 & 0.6250 \\
  $S_4(7)$ & 0.7341 & 0.7308 \\
  $O_8^+(2)$ & 0.8555  & 0.9245  \\
  $O_8^-(2)$ & 0.7578  &  0.8462 \\
  $^3D_4(2)$ & 0.6920  & 0.6571 \\
  $L_3(11)$ & 0.6660  & 0.6970 \\
  $G_2(4)$ & 0.6449  &  0.7500 \\
  $L_3(13)$ & 0.5354 & 0.5938 \\
  $U_3(13)$ & 0.6662  &  0.6957 \\
  $L_4(4)$ & 0.6020  & 0.5714  \\
  \hline
\end{tabular}
\end{table}
\end{center}

\end{document}